\documentclass[11pt]{amsart}
\usepackage{a4wide}
\usepackage[T1]{fontenc}
\usepackage[utf8]{inputenc}
\usepackage{lmodern}
\usepackage{amsmath,amssymb,mathtools}
\usepackage{dsfont}
\usepackage{url}
\usepackage{xcolor}
\usepackage{hyperref}
\hypersetup{colorlinks=true, linkcolor=black, citecolor=black, urlcolor=blue}

\title[Not too many miles from $\ell_\infty$ to $L_\infty$]{Not too many miles from $\ell_\infty$ to $L_\infty$}

\author{Tomasz Kania}
\address[T.~Kania]{Mathematical Institute\\Czech Academy of Sciences\\\v Zitn\'a 25 \\115 67 Praha 1\\Czech Republic  and  Institute of Mathematics and Computer Science\\ Jagiellonian University\\ {\L}ojasiewicza 6, 30-348 Krak\'{o}w, Poland
}
\email{kania@math.cas.cz, tomasz.marcin.kania@gmail.com}
\thanks{RVO: 67985840.}

\subjclass[2020]{Primary 46B03; Secondary 46B25, 46E15, 46B04.}
\keywords{Banach--Mazur distance, $L_\infty$-space, $\ell_\infty$, Pe\l czy\'nski decomposition, complemented subspace, kernel, bound-two isomorphism.}

\newtheorem{maintheorem}{Theorem}

\newtheorem{theorem}{Theorem}[section]
\newtheorem{lemma}[theorem]{Lemma}
\newtheorem{proposition}[theorem]{Proposition}
\newtheorem{corollary}[theorem]{Corollary}
\theoremstyle{remark}
\newtheorem{remark}[theorem]{Remark}

\newcommand{\R}{\mathbb R}

\newcommand{\K}{\mathbb K}
\newcommand{\dbm}{d_{\mathrm{BM}}}
\newcommand{\Bop}{\mathcal B}
\newcommand{\Id}{I}
\newcommand{\oplinf}{\oplus_\infty}
\newcommand{\ind}{\mathds{1}}

\begin{document}

\begin{abstract}
For either real or complex scalars, we improve the known upper bound for the Banach--Mazur distance between $L_\infty[0,1]$ and $\ell_\infty$, proving $\dbm(L_\infty[0,1],\ell_\infty)\leqslant (2+\sqrt5)^2=9+4\sqrt5=17.944\ldots$.  The additional input is a stabilised kernel estimate: the copy of $\ell_\infty$ in $L_\infty$ is chosen so that its kernel is an $\ell_\infty$-sum of singular two-point hyperplanes, and Cohen's bound-two construction is then applied coordinatewise.  The quantitative part of the proof is an asymmetric Pe\l czy\'nski-type factorisation, with the elementary optimisation included below.
\end{abstract}

\maketitle

\section{Introduction}

Let \(\K\) denote either \(\R\) or \(\mathbb C\).  We write \(X_\K=L_\infty([0,1],\mu;\K)\), where \(\mu\) is Lebesgue measure, and \(Y_\K=\ell_\infty(\K)\).  The Banach--Mazur distance between isomorphic Banach spaces \(E\) and \(F\), over the same scalar field, is denoted by \(\dbm(E,F)\).

The numerical problem of determining \(\dbm(\ell_\infty,L_\infty)\) was asked explicitly by Hannes Thiel on MathOverflow in 2016 \cite{ThielMO}.  The problem is older than that formulation: its finiteness is tied to Pe\l czy\'nski's decomposition method and to the complemented-subspace structure studied in \cite{Pelczynski1960}.  Korpalski and Plebanek proved the upper estimate \((3+\sqrt2)^2=19.485\ldots\) \cite{KorpalskiPlebanek}.  The optimised square-decomposition argument of \cite{KaniaLewicki} improves their upper bound slightly to \(9+6\sqrt3=19.392\ldots\).

\begin{maintheorem}\label{thm:main-interval}
For \(\K=\R\) or \(\K=\mathbb C\),
\[
        \dbm(X_\K,Y_\K)
        \leqslant (2+\sqrt5)^2=9+4\sqrt5=17.944\ldots
\]
\end{maintheorem}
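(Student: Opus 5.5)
The plan is a Pe\l czy\'nski-type decomposition in which every constant is tracked. Both spaces will be shown to be close to the same intermediate $\ell_\infty$-sum, and each of the two comparisons should cost at most $2+\sqrt5$. Multiplying the two gives $(2+\sqrt5)^2$.

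\emph{Step 1: a good copy of $Y_\K$ inside $X_\K$.} Partition $[0,1]$ into countably many intervals $I_n$, and split each $I_n$ into two halves $I_n^0,I_n^1$. Let $J\colon Y_\K\to X_\K$ send $(a_n)$ to $\sum_n a_n\ind_{I_n}$; this is an isometric embedding. As the projection onto $J(Y_\K)$ I will not use the averaging conditional expectation. Instead I use a ``two-point'' functional on each block: on each $L_\infty(I_n)$, take a norm-one functional $\varphi_n$ of the form $\varphi_n(f)=\tfrac12(\psi_n^0(f)+\psi_n^1(f))$. Here $\psi_n^i$ is a multiplicative (point-mass-type) singular functional supported near $I_n^i$, obtained from a point of the Stone space of $L_\infty(I_n^i)$. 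Set $P f=\sum_n\varphi_n(f|_{I_n})\ind_{I_n}$. Then $\|P\|=1$, and
\[
\ker P\;=\;\Bigl(\bigoplus_n \ker\varphi_n\Bigr)_{\infty}.
\]
Each $\ker\varphi_n$ is a hyperplane of $C(K_n)\cong L_\infty(I_n)$ defined by a two-point functional $\tfrac12(\delta_{s}+\delta_{t})$ with $s,t$ non-isolated.

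\emph{Step 2: the kernel estimate.} Cohen's bound-two construction gives an isomorphism $T_n\colon\ker\varphi_n\to L_\infty(I_n)$ with $\|T_n\|\|T_n^{-1}\|\leqslant 2$. I expect that it can be normalised uniformly in $n$, since all the blocks are isometric copies of one model situation. Taking the $\ell_\infty$-sum of the $T_n$ gives
\[
\dbm\Bigl(\ker P,\ \bigl(\textstyle\bigoplus_n L_\infty(I_n)\bigr)_\infty\Bigr)\leqslant 2,
\qquad\text{and}\qquad \bigl(\textstyle\bigoplus_n L_\infty(I_n)\bigr)_\infty\cong X_\K \text{ isometrically.}
\]
This is the ``stabilised'' input. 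It replaces the crude bound $\|I-P\|\leqslant 2$ combined with a separate identification of the complement.

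\emph{Step 3: the asymmetric factorisation and optimisation.} Let $Z=\ker P$, so that $X_\K=J(Y_\K)\oplus Z$. Let $Q=I-P$, which has $\|Q\|\leqslant 2$. For a parameter $\lambda>0$, the map $f\mapsto (Pf,\lambda Q f)$ into $Y_\K\oplus_\infty Z$, composed with $\mathrm{id}\oplus T$ where $T=\bigoplus_n T_n$, gives an isomorphism $X_\K\to Y_\K\oplus_\infty X_\K$. Its norm and the norm of its inverse are explicit functions of $\lambda$ and of the constants $\|P\|=1$, $\|Q\|\leqslant2$, $\|T\|$ and $\|T^{-1}\|$.

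On the other side, $X_\K$ embeds isometrically into $Y_\K$ via evaluations at a countable dense set of the Stone space. It is then $1$-complemented because $X_\K$ is $1$-injective. Combined with $Y_\K\cong\ell_\infty(Y_\K)$ isometrically, and with the analogous block structure, this produces an isomorphism $Y_\K\to Y_\K\oplus_\infty X_\K$ with a second parameter $\mu$.

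The Banach--Mazur distance is then at most the product of the two distortions. I will minimise each distortion over its parameter. I expect each minimum to be attained where two competing quantities balance, namely at the root of a quadratic of the form $\lambda^2-4\lambda-1=0$. That root is $\lambda=2+\sqrt5$, which gives $(2+\sqrt5)^2=9+4\sqrt5$ for the product.

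\emph{Main obstacle.} The delicate point is Step 2 together with its interaction with Step 3. I have to check that Cohen's bound-two isomorphism for a singular two-point hyperplane can be chosen with norms split so as to match $\|Q\|\leqslant2$, and that it works uniformly across the blocks. I also have to check that the resulting optimisation genuinely has the quadratic above as its balancing equation, rather than a worse one. If the naive splitting of the norms gives a larger constant, the first thing I would try is to absorb the factor $\tfrac12$ of the two-point functional into $\lambda$.
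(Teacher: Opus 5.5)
Your Steps 1--2 match the paper's kernel construction. You take a norm-one projection onto a copy of $\ell_\infty$ whose kernel is an $\ell_\infty$-sum of singular two-point hyperplanes, each $2$-isomorphic to $L_\infty$ by Cohen. Using $\tfrac12(\delta_s+\delta_t)$ with range $\K\ind_{I_n}$, rather than $\tfrac12(\delta_p-\delta_q)$ with range $\K u$ for $u=\ind_U-\ind_{K\setminus U}$, is harmless: multiplication by $u$ maps one kernel isometrically onto the other. However, $s,t$ must be a pair to which Cohen's construction applies, as in Theorem~\ref{thm:cohen-bound-two}, not arbitrary points.

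The genuine gap is Step~3. Factoring through $Y_\K\oplus_\infty X_\K$ and multiplying two distortions does not give $(2+\sqrt5)^2$ from these constants, and the quadratic $\lambda^2-4\lambda-1=0$ does not arise from the maps you describe.
\begin{itemize}
\item For $f\mapsto(Pf,\lambda TQf)$ the distortion is at most $\max\{1,2s\}(1+c/s)$, where $s=\lambda\|T\|$ and $c=\|T\|\|T^{-1}\|\leqslant2$. This is minimised at the kink $2s=1$, with value $\rho c+1=5>2+\sqrt5$ (compare Proposition~\ref{prop:cylindrical}).
\item The natural second leg is built from $Y=R[Y]\oplus\ker R$ and $X\cong_1X\oplus_\infty X$. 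The same kind of computation shows it costs $(1+\sigma)^2=9$, because the non-isometric splitting of $Y$ is paid for in both directions.
\end{itemize}
Your scheme therefore gives $5\cdot9=45$, which is worse than the bounds being improved.

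The missing idea is to build a single operator $X\to Y$ instead of a composite, so that the splitting $Y=R[Y]\oplus\ker R$ is paid for only once and additively. In Lemma~\ref{lem:T-bijection} the absorbed kernel (via $A\colon E\to X$) and the $R[Y]$-component of the range coordinate $\eta^{-1}Px$ are placed side by side in $X\oplus_\infty X\cong X\cong R[Y]$. Only the $\ker R$-component is then added, with its own weight $\gamma$. This gives forward norm $\max\{\rho\alpha\|A\|,\beta\}+\sigma\gamma$ and inverse norm $\|A^{-1}\|/\alpha+1/\beta+\sigma/\gamma$. Optimising as in Lemma~\ref{lem:optimisation} (take $\beta=\rho a$, then AM--GM in $\gamma$) yields $(\sigma+\sqrt{\rho c+1})^2$. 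The square comes from $\inf_{\gamma>0}(\kappa+\sigma\gamma)(\kappa'+\sigma/\gamma)=(\sigma+\sqrt{\kappa\kappa'})^2$ with $\kappa\kappa'=\rho c+1=5$, not from squaring a one-leg distortion.

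A smaller slip: the Stone space of the Lebesgue measure algebra is not separable, so there is no countable dense set of evaluations. Indeed, given ultrafilters $\mathcal U_n$, choose $a_n\in\mathcal U_n$ of measure $<2^{-n-2}$; the complement of $\bigvee_na_n$ is a non-zero clopen set containing none of the $\mathcal U_n$. Embed $X_\K$ into $Y_\K$ instead by $f\mapsto(\int fg_n\,d\mu)_n$, with $(g_n)$ norming in the unit ball of $L_1$, and then apply Kelley.
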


For orientation, Korpalski--Plebanek proved in the real case that \(7.413<\dbm(X_\R,Y_\R)\) \cite[Theorem~4.4]{KorpalskiPlebanek}.  That lower estimate is not used below.  The upper estimate in Theorem~\ref{thm:main-interval} improves the Korpalski--Plebanek upper bound by \(1.541\) and the later \(9+6\sqrt3\) bound by \(1.448\).

The proof of the upper estimate is uniform in the scalar field.  The mildly scalar-sensitive points are the use of Cohen's bound-two theorem and the use of injectivity for the reverse projection.  In the two-point Cohen construction used here, the representing-measure formula has real coefficients; it therefore complexifies with the same norm estimates.  Kelley's injectivity theorem is used only in its standard form for spaces \(C(K;\K)\) with \(K\) Stonean, and is available for both \(\K=\R\) and \(\K=\mathbb C\).  Thus the complex case introduces no additional obstruction for the upper bound proved in this note.  We make no assertion here about the best lower bound in the complex category.

The abstract mechanism is the following kernel-absorption principle.  If \(P\) is the projection onto the copy of \(Y\) in \(X\), then \(E=\ker P\) is the kernel which will be absorbed; if \(R\) is the reverse projection, then \(F=\ker R\) is not absorbed and appears only through the complement constant of \(R\).

\begin{maintheorem}\label{thm:kernel-absorption-main}
Let \(X\) and \(Y\) be non-zero Banach spaces over \(\K\).  Suppose that \(P\) is a norm-one projection on \(X\) such that \(P[X]\) is isometric to \(Y\) and \(E=\ker P\) is isomorphic to \(X\).  Suppose also that \(R\) is a norm-one projection on \(Y\) such that \(R[Y]\) is isometric to \(X\), and that \(X\) is isometric to \(X\oplinf X\).  If \(\rho=\|\Id_X-P\|\) and \(\sigma=\|\Id_Y-R\|\), then
\[
        \dbm(X,Y)\leqslant \bigl(\sigma+\sqrt{\rho\,\dbm(E,X)+1}\bigr)^2.
\]
In particular, \(\dbm(X,Y)\leqslant \bigl(2+\sqrt{2\,\dbm(E,X)+1}\bigr)^2\).
\end{maintheorem}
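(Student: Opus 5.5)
The plan is a quantitative Pe\l czy\'nski decomposition: write down one explicit isomorphism \(T\colon X\to Y\) that depends on a free scaling parameter \(\lambda>0\), bound \(\|T\|\) and \(\|T^{-1}\|\), and then optimise over \(\lambda\). The kernel \(E\) is absorbed through the \(\oplinf\)-square structure of \(X\); the kernel \(F=\ker R\) is carried along unchanged and costs only the factor \(\sigma\).

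\textbf{Fixing the data.} Fix \(\varepsilon>0\) and choose the following maps.
\begin{itemize}
\item An isomorphism \(u\colon E\to X\) with \(\|u\|=1\) and \(\|u^{-1}\|\leqslant \dbm(E,X)+\varepsilon=:d\).
\item An isometry \(\iota\colon P[X]\to Y\).
\item An isometry \(j\colon X\to R[Y]\).
\item An isometry \(\Phi\colon X\oplinf X\to X\).
\end{itemize}
Then define
\[
        Tx=j\,\Phi\bigl(j^{-1}R\iota Px,\ \lambda\,u(\Id_X-P)x\bigr)+(\Id_Y-R)\iota Px .
\]
The algebra behind this is the chain \(X\cong Y\oplus E\cong Y\oplus X\cong (X\oplus F)\oplus X\cong X\oplus F\cong Y\), with every identification made explicit.

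\textbf{Inverting \(T\).} Given \(z\in Y\), write \(\Phi^{-1}j^{-1}Rz=(a,b)\) and set
\[
        T^{-1}z=\iota^{-1}\bigl(ja+(\Id_Y-R)z\bigr)+\lambda^{-1}u^{-1}b .
\]
The verification is routine. It uses \(R j=j\), \((\Id_Y-R)R=0\), and the fact that \(\iota^{-1}\) lands in \(P[X]\) while \(u^{-1}\) lands in \(E\). The second identity is what lets \(T^{-1}\) recover the decomposition \(x=Px+(\Id_X-P)x\).

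\textbf{Norm estimates.}
\begin{itemize}
\item For \(T\): the \(\oplinf\)-norm gives \(\|Tx\|\leqslant \max\bigl(\|Px\|,\lambda\|(\Id_X-P)x\|\bigr)+\sigma\|Px\|\). Hence \(\|T\|\) is at most roughly \(\max(1,\lambda\rho)+\sigma\).
\item For \(T^{-1}\): we have \(\|ja\|\leqslant\|Rz\|\leqslant\|z\|\), \(\|(\Id_Y-R)z\|\leqslant\sigma\|z\|\) and \(\|u^{-1}b\|\leqslant d\|z\|\). Hence \(\|T^{-1}\|\) is at most roughly \(1+\sigma+d/\lambda\).
\end{itemize}

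\textbf{Main obstacle.} This naive bookkeeping gives the product \((\lambda\rho+\sigma)(1+\sigma+d/\lambda)\). That has the right shape but not the claimed perfect square \((\sigma+\sqrt{\rho d+1})^2\), so the crude triangle inequalities must be sharpened. I would look for the gain in two places.
\begin{itemize}
\item Do not estimate \(\|(\Id_Y-R)z\|\) and \(\|ja\|\) separately. The vector \(ja+(\Id_Y-R)z\) differs from \(z\) only in the \(R[Y]\)-component, so it should be bounded as \(\|z\|+\|ja-Rz\|\) or similar, with the \(\oplinf\)-structure controlling \(\|ja-Rz\|\).
\item Place the weight \(\lambda\) asymmetrically, on the \(P[X]\)-summand instead of on \(E\), or split it between the two, so that the two estimates take the forms \(\sigma+\sqrt{\cdot}\) and \(\sigma+\sqrt{\cdot}\).
\end{itemize}
The target is a pair of bounds of the form \(\|T\|\leqslant \sigma+\lambda'\) and \(\|T^{-1}\|\leqslant \sigma+(\rho d+1)/\lambda'\), or some comparable pair, whose product is minimised at \(\lambda'=\sqrt{\rho d+1}\). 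This is exactly the ``asymmetric factorisation'' the abstract refers to. I expect the real work of the proof to lie in getting the constant \(1\) inside the square root, rather than \(1+\sigma\) or \(\rho+d\).

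\textbf{Conclusion.} After optimising, let \(\varepsilon\to0\). The ``in particular'' clause follows by monotonicity of the bound in \(\rho\) and \(\sigma\), since every projection \(Q\) with \(\|Q\|=1\) satisfies \(\|\Id-Q\|\leqslant 2\); so \(\rho,\sigma\leqslant 2\).
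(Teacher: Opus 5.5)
Your operator has the right architecture: it is the paper's operator up to the order of the coordinates and the direction of $\Phi$. But as it stands the proposal does not prove the stated bound, and neither of your suggested repairs closes the gap.

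With only $\lambda$ free, your estimate $\bigl(\max\{1,\lambda\rho\}+\sigma\bigr)\bigl(1+\sigma+d/\lambda\bigr)$ is too weak. For $\rho=\sigma=d=2$ its best value is $10+4\sqrt6\approx19.80$, not $(2+\sqrt5)^2\approx17.94$.

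Your first repair writes $ja+(\Id_Y-R)z=z+(ja-Rz)$. This gains nothing in general, because $ja-Rz=j\bigl(a-\Phi(a,b)\bigr)$ is a priori only bounded by $2\|z\|$. Your second repair moves or splits the weight between the $P[X]$-summand and $E$. This adds no freedom: $\|T\|\,\|T^{-1}\|$ is unchanged under $T\mapsto cT$, so only the ratio of those two weights matters, and that ratio is already your $\lambda$.

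The missing idea is a third, independent weight on the $F$-piece. The vector $\iota Px$ splits as $R\iota Px+(\Id_Y-R)\iota Px$, and these two parts must be scaled separately. With $(a,b)=\Phi^{-1}j^{-1}Rz$ as before, put
\[
Tx=j\Phi\bigl(\beta\, j^{-1}R\iota Px,\ \lambda u(\Id_X-P)x\bigr)+\gamma(\Id_Y-R)\iota Px,
\qquad
T^{-1}z=\iota^{-1}\bigl(\beta^{-1}ja+\gamma^{-1}(\Id_Y-R)z\bigr)+\lambda^{-1}u^{-1}b .
\]
Your own bookkeeping then gives $\|T\|\leqslant\max\{\beta,\lambda\rho\}+\sigma\gamma$ and $\|T^{-1}\|\leqslant\beta^{-1}+d\lambda^{-1}+\sigma\gamma^{-1}$. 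Your construction is the case $\beta=\gamma=1$. There the cost $\sigma$ of $F$ is charged at full weight to both factors, instead of being balanced as $\sigma\gamma$ against $\sigma/\gamma$.

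Take $\beta=\lambda\rho=1$. The product becomes $\rho d+1+\sigma^2+\sigma\gamma(\rho d+1)+\sigma/\gamma$, and AM--GM at $\gamma=(\rho d+1)^{-1/2}$ yields $(\sigma+\sqrt{\rho d+1})^2$. After rescaling $T$ by $\gamma^{-1}$, this is exactly your target pair $\|T\|\leqslant\sigma+\lambda'$ and $\|T^{-1}\|\leqslant\sigma+(\rho d+1)/\lambda'$, with $\lambda'=\sqrt{\rho d+1}$. This is the content of Lemmas~\ref{lem:T-bijection} and~\ref{lem:optimisation}, which use the weights $\alpha,\beta,\gamma$.

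The rest of your outline is fine: the $\varepsilon$-approximation, and the use of $\rho,\sigma\leqslant2$ for the final clause.
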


The task is therefore to arrange the embedding \(\ell_\infty\hookrightarrow L_\infty[0,1]\) so that its kernel is close to \(L_\infty[0,1]\).  We prove \(\dbm(E,L_\infty[0,1])\leqslant2\) using a singular two-point hyperplane and Cohen's bound-two construction.  Theorem~\ref{thm:kernel-absorption-main} then gives \(\dbm(L_\infty[0,1],\ell_\infty)\leqslant (2+\sqrt{2\cdot2+1})^2=(2+\sqrt5)^2\).

\section{Absorbing one kernel}\label{sec:kernel-absorption}

We first prove Theorem~\ref{thm:kernel-absorption-main}.  The notation is kept slightly more general than is needed in the application, because the two complement constants cost nothing to track.  Let \(P\in\Bop(X)\) and \(R\in\Bop(Y)\) be norm-one projections.  Put \(X'=P[X]\), \(Y'=R[Y]\), \(E=\ker P\) and \(F=\ker R\).  Assume that \(\eta\colon Y\to X'\) and \(\theta\colon X\to Y'\) are surjective isometries, that \(\Phi\colon X\to X\oplinf X\) is a surjective isometry with coordinate maps \(\pi_1,\pi_2\), and that \(A\colon E\to X\) is an isomorphism onto \(X\).  Finally put \(\rho=\|\Id_X-P\|\), \(\sigma=\|\Id_Y-R\|\) and \(c=\|A\|\|A^{-1}\|\); since \(P\) and \(R\) are norm-one projections, \(\rho,\sigma\leqslant2\).

For \(\alpha,\beta,\gamma>0\) define \(T\colon X\to Y\) by
\begin{equation}\label{eq:T}
Tx=\theta\Phi^{-1}\bigl(\alpha A(\Id_X-P)x,\,\beta\theta^{-1}R\eta^{-1}Px\bigr)+\gamma(\Id_Y-R)\eta^{-1}Px .
\end{equation}
The formula is well defined because \(Px\in X'=\eta[Y]\), \(R\eta^{-1}Px\in Y'=\theta[X]\), and \((\Id_X-P)x\in E\).

\begin{lemma}\label{lem:T-bijection}
The operator \(T\) is a bijection of \(X\) onto \(Y\), and
\[
        \|T\|\leqslant\max\{\rho\alpha\|A\|,\beta\}+\sigma\gamma,
        \qquad
        \|T^{-1}\|\leqslant \frac{\|A^{-1}\|}{\alpha}+\frac1\beta+\frac\sigma\gamma .
\]
\end{lemma}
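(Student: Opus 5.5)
The plan is to split \(T\) into two pieces: an isometric ``gluing'' part and a small correction term. Then I invert the gluing part coordinatewise and recover \(x\) from \(Tx\) in three steps.

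\textbf{Norm of \(T\).} Write
\[
Tx=\theta\Phi^{-1}(u,v)+w,
\]
with \(u=\alpha A(\Id_X-P)x\in X\), \(v=\beta\theta^{-1}R\eta^{-1}Px\in X\) and \(w=\gamma(\Id_Y-R)\eta^{-1}Px\in F\). Since \(\theta\) and \(\Phi^{-1}\) are isometries and the sum in \(X\oplinf X\) carries the max norm, the first term has norm \(\max\{\|u\|,\|v\|\}\). We have \(\|u\|\leqslant\alpha\|A\|\rho\|x\|\) and \(\|v\|\leqslant\beta\|R\|\|P\|\|x\|=\beta\|x\|\). Likewise \(\|w\|\leqslant\gamma\sigma\|x\|\), because \(\eta^{-1}\) is an isometry and \(\|P\|=1\). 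The triangle inequality then gives the bound on \(\|T\|\).

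\textbf{Injectivity and surjectivity.} Given \(y\in Y\), set \(y_1=Ry\in Y'\) and \(y_2=(\Id_Y-R)y\in F\). Note that \(\theta\Phi^{-1}(u,v)\in Y'\) and \(w\in F\), so the equation \(Tx=y\) splits along \(Y=Y'\oplus F\) into
\[
\theta\Phi^{-1}(u,v)=y_1,\qquad w=y_2.
\]
Put \((a,b)=\Phi\theta^{-1}y_1\). We need \(u=a\) and \(v=b\).

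\begin{enumerate}
\item From \(v=b\) we get \(R\eta^{-1}Px=\theta b/\beta\).
\item From \(w=y_2\) we get \((\Id_Y-R)\eta^{-1}Px=y_2/\gamma\). Adding the two components determines
\[
\eta^{-1}Px=\theta b/\beta+y_2/\gamma,
\]
hence \(Px=\eta(\theta b/\beta+y_2/\gamma)\). This is legitimate because \(\eta\) maps onto \(X'\), so every value is attained.
\item From \(u=a\) we get \((\Id_X-P)x=A^{-1}a/\alpha\in E\), using that \(A\) is onto \(X\).
\end{enumerate}

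Therefore
\[
x=A^{-1}a/\alpha+\eta(\theta b/\beta+y_2/\gamma)
\]
is the unique solution. The two summands lie in \(E\) and \(X'\), and \(X=X'\oplus E\), so the reconstruction is consistent: \(Px\) and \((\Id_X-P)x\) are exactly these summands. This shows that \(T\) is bijective, with an explicit formula for \(T^{-1}\).

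\textbf{Norm of \(T^{-1}\).} Estimate each summand, using \(\|a\|,\|b\|\leqslant\|\Phi\theta^{-1}y_1\|=\|y_1\|\leqslant\|y\|\) and \(\|y_2\|\leqslant\sigma\|y\|\):
\begin{itemize}
\item \(\|A^{-1}a\|/\alpha\leqslant\|A^{-1}\|\|y\|/\alpha\);
\item \(\|\theta b\|/\beta\leqslant\|y\|/\beta\), since \(\eta\) and \(\theta\) are isometries;
\item \(\|y_2\|/\gamma\leqslant\sigma\|y\|/\gamma\).
\end{itemize}
Summing gives the stated bound on \(\|T^{-1}\|\).

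\textbf{Main obstacle.} The only delicate point is the bookkeeping in the bijectivity step: checking that the decomposition \(Y=Y'\oplus F\) separates the two terms of \(T\), and that each preimage can be taken in the correct range. The norm estimates are then immediate.
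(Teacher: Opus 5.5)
Your proof is correct and is essentially the paper's argument: the inverse you obtain, $x=\alpha^{-1}A^{-1}a+\eta\bigl(\beta^{-1}\theta b+\gamma^{-1}(\Id_Y-R)y\bigr)$ with $(a,b)=\Phi\theta^{-1}Ry$, is exactly the operator $S$ of \eqref{eq:S}, and your norm bounds are the same term-by-term estimates. The only difference is presentational: you derive $S$ by solving $Tx=y$ along $Y=Y'\oplus F$ and $X=X'\oplus E$, with the existence direction implicitly using $R\theta=\theta$ and $R(\Id_Y-R)=0$, whereas the paper writes $S$ down and checks $STx=x$ and $TSz=z$ directly.
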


\begin{proof}
We use \(P\eta=\eta\) and \(R\theta=\theta\).  Define \(S\colon Y\to X\) by
\begin{equation}\label{eq:S}
Sz=\eta\left(\theta\left(\frac1\beta\pi_2\Phi\theta^{-1}Rz\right)+\frac1\gamma(\Id_Y-R)z\right)+\frac1\alpha A^{-1}\pi_1\Phi\theta^{-1}Rz .
\end{equation}
We show that \(S=T^{-1}\).

Fix \(x\in X\) and write \(e=(\Id_X-P)x\), \(y=\eta^{-1}Px\), \(u=\theta^{-1}Ry\) and \(f=(\Id_Y-R)y\).  Then \(Tx=\theta\Phi^{-1}(\alpha Ae,\beta u)+\gamma f\).  Since \(R\theta=\theta\) and \(Rf=0\), one has \(RTx=\theta\Phi^{-1}(\alpha Ae,\beta u)\) and \((\Id_Y-R)Tx=\gamma f\).  Hence \(\Phi\theta^{-1}RTx=(\alpha Ae,\beta u)\), and the three pieces decoded by \(S\) are precisely \(e\), \(u\) and \(f\).  The expression inside the parentheses in \eqref{eq:S}, evaluated at \(Tx\), is therefore \(\theta u+f=Ry+(\Id_Y-R)y=y\), and so \(STx=\eta y+e=x\).

Conversely, fix \(z\in Y\) and write \(w=\theta^{-1}Rz\), \((v_1,v_2)=\Phi w\), \(e=\alpha^{-1}A^{-1}v_1\), \(u=\beta^{-1}v_2\), \(f=\gamma^{-1}(\Id_Y-R)z\) and \(y=\theta u+f\).  Then \(Sz=\eta y+e\).  Since \(P\eta=\eta\) and \(Pe=0\), we have \(PSz=\eta y\), \(\eta^{-1}PSz=y\) and \((\Id_X-P)Sz=e\).  Since \(R\theta=\theta\) and \(Rf=0\), we also have \(Ry=\theta u\), \(\theta^{-1}R\eta^{-1}PSz=u\) and \((\Id_Y-R)\eta^{-1}PSz=f\).  Thus \(TSz=\theta\Phi^{-1}(v_1,v_2)+(\Id_Y-R)z=\theta w+(\Id_Y-R)z=z\).

It remains only to estimate the two norms.  If \(\|x\|\leqslant1\) and the notation from the first paragraph is used, then \(\|e\|\leqslant\rho\), \(\|y\|\leqslant1\), \(\|u\|\leqslant1\) and \(\|f\|\leqslant\sigma\).  Since \(\theta\) and \(\Phi^{-1}\) are isometries and \(X\oplinf X\) has the maximum norm, this gives \(\|Tx\|\leqslant\max\{\rho\alpha\|A\|,\beta\}+\sigma\gamma\).  Similarly, if \(\|z\|\leqslant1\), then \(\|w\|\leqslant1\) and \(\max\{\|v_1\|,\|v_2\|\}\leqslant1\), so \(\|e\|\leqslant\|A^{-1}\|/\alpha\), \(\|u\|\leqslant1/\beta\) and \(\|f\|\leqslant\sigma/\gamma\).  Hence \(\|Sz\|\leqslant\|A^{-1}\|/\alpha+1/\beta+\sigma/\gamma\).
\end{proof}

\begin{lemma}\label{lem:optimisation}
If \(c,\rho,\sigma>0\), then
\[
\inf_{a,\beta,\gamma>0}\left(\max\{\rho a,\beta\}+\sigma\gamma\right)\left(\frac ca+\frac1\beta+\frac\sigma\gamma\right)=\bigl(\sigma+\sqrt{\rho c+1}\bigr)^2.
\]
The infimum is attained, up to homogeneous scaling, for \(a=t\), \(\beta=\rho t\) and \(\gamma=\rho t/\sqrt{\rho c+1}\), where \(t>0\).
\end{lemma}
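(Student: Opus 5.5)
The plan is to prove matching lower and upper bounds.

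First, I will reduce the number of free variables. The expression is invariant under the scaling \((a,\beta,\gamma)\mapsto(ta,t\beta,t\gamma)\), since the first factor is homogeneous of degree \(1\) and the second of degree \(-1\). So I may normalise \(M:=\max\{\rho a,\beta\}=1\).

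Next, I will remove the maximum. With \(M\) fixed, the second factor \(c/a+1/\beta\) is decreasing in both \(a\) and \(\beta\). It is therefore minimised by taking both as large as the constraint allows, namely \(a=M/\rho\) and \(\beta=M\). This gives
\[
\frac ca+\frac1\beta\geqslant\frac{\rho c+1}{M}.
\]
Writing \(k=\rho c+1\) and \(s=\sigma\gamma/M>0\), the product is then bounded below by
\[
g(s)=(1+s)\Bigl(k+\frac{\sigma^2}{s}\Bigr).
\]
Equality holds when \(\rho a=\beta=M\).

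Then I will do the one-variable minimisation. Expanding gives
\[
g(s)=k+\sigma^2+ks+\frac{\sigma^2}{s}.
\]
By AM--GM, \(ks+\sigma^2/s\geqslant 2\sigma\sqrt k\), with equality at \(s=\sigma/\sqrt k\). Hence
\[
g(s)\geqslant k+\sigma^2+2\sigma\sqrt k=\bigl(\sigma+\sqrt{\rho c+1}\bigr)^2.
\]
This lower bound holds for every admissible triple \((a,\beta,\gamma)\).

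For the upper bound, I will show the value is attained. Take \(a=t\) and \(\beta=\rho t\), so that \(M=\rho t\). Put \(s=\sigma/\sqrt k\), which means \(\sigma\gamma=M\sigma/\sqrt k\), i.e. \(\gamma=\rho t/\sqrt{\rho c+1}\). Substituting these values gives exactly \((\sigma+\sqrt{\rho c+1})^2\). This matches the stated minimiser, and so the infimum is a minimum attained along that ray.

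None of this is hard. The only point needing care is the first step, replacing the max by the case \(\rho a=\beta\). I will justify it via the monotonicity argument above rather than by a case split.
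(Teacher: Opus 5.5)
Your proof is correct and follows essentially the same route as the paper: use the monotonicity of \(c/a+1/\beta\) to reduce to the slice \(\beta=\rho a\), expand, and apply AM--GM to the two cross terms, which gives equality at \(\gamma=\rho a/\sqrt{\rho c+1}\). The substitution \(s=\sigma\gamma/M\) is only a cosmetic repackaging of the paper's direct expansion of \(G(a,\rho a,\gamma)\), and the normalisation \(M=1\) is never actually used.
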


\begin{proof}
Let \(G(a,\beta,\gamma)\) denote the expression being minimised, and put \(m=\max\{\rho a,\beta\}\).  Replacing \((a,\beta)\) by \((m/\rho,m)\) leaves the first factor equal to \(m+\sigma\gamma\).  Since \(m/\rho\geqslant a\) and \(m\geqslant\beta\), and since \(c/a+1/\beta+\sigma/\gamma\) is decreasing separately in \(a\) and \(\beta\), the second factor cannot increase.  Thus the infimum is attained on the slice \(\beta=\rho a\).

On that slice,
\[
G(a,\rho a,\gamma)=(\rho a+\sigma\gamma)\left(\frac ca+\frac1{\rho a}+\frac\sigma\gamma\right)
=\rho c+1+\sigma^2+\frac{\rho\sigma a}{\gamma}+\frac{\sigma\gamma(c+1/\rho)}{a}.
\]
The last two terms have product \(\sigma^2(\rho c+1)\); hence their sum is at least \(2\sigma\sqrt{\rho c+1}\).  This gives \(G(a,\rho a,\gamma)\geqslant (\sigma+\sqrt{\rho c+1})^2\), and equality holds precisely when \(\rho a/\gamma=\gamma(c+1/\rho)/a\), that is, when \(\gamma=\rho a/\sqrt{\rho c+1}\).
\end{proof}

\begin{proof}[Proof of Theorem~\ref{thm:kernel-absorption-main}]
If \(\sigma=0\), then \(R=\Id_Y\) and the isometry \(\theta\colon X\to Y\) gives \(\dbm(X,Y)=1\).  We may therefore assume that \(\sigma>0\).  Since \(E\cong X\) and \(X\ne0\), the projection \(P\) is not the identity, so \(\rho>0\).

Choose \(A\colon E\to X\) with \(\|A\|\|A^{-1}\|<\dbm(E,X)+\varepsilon\).  Lemma~\ref{lem:T-bijection}, followed by Lemma~\ref{lem:optimisation} with \(a=\alpha\|A\|\), gives \(\dbm(X,Y)\leqslant (\sigma+\sqrt{\rho(\dbm(E,X)+\varepsilon)+1})^2\).  Letting \(\varepsilon\downarrow0\) proves the first assertion, and the second follows from \(\rho,\sigma\leqslant2\).
\end{proof}

\begin{remark}\label{rem:threshold}
For norm-one projections the estimate reads \(\dbm(X,Y)\leqslant (2+\sqrt{2c+1})^2\), where \(c=\dbm(E,X)\).  This improves the previously available constant \(9+6\sqrt3\) whenever \(c<6+3\sqrt3-2\sqrt{9+6\sqrt3}=2.389\).  Thus the estimate \(c\leqslant2\) obtained below has some room.  One alternative route would be to improve the complement norm of a projection from \(L_\infty\) onto an isometric copy of \(\ell_\infty\); for orientation, it has been observed in a MathOverflow discussion that this should not be expected in that form \cite{ComplementConstantMO}.  This observation is not used in the proof.
\end{remark}

\section{The stabilised two-point kernel}\label{sec:stabilised-kernel}

Let \(\mu\) be Lebesgue measure on \([0,1]\), and let
\[
        K=\Phi_\mu=S(\mathcal B_\mu)
\]
be the Stone space of the corresponding measure Boolean algebra; thus \(X_\K\cong C(K;\K)\) isometrically.  Being a Stone space, \(K\) is zero-dimensional.  Moreover, \(K\) has no isolated points: in the notation of Dales--Lau--Strauss, Proposition~2.15(iv) says that \(\Phi_\nu\) has no isolated points exactly when \(\nu\) is continuous, and this applies to Lebesgue measure.  The same paper denotes the hyper-Stonean space of the unit interval by \(H\) and identifies it with this character space \(\Phi_\mu\) \cite[Proposition~2.15(iv), Definition~2.18]{DalesLauStrauss}.

\begin{theorem}\label{thm:cohen-bound-two}
For \(\K=\R\) or \(\K=\mathbb C\), there are distinct points \(p,q\in K\) such that, if \(K_{p\sim q}\) is obtained from \(K\) by identifying \(p\) and \(q\), then
\[
        \dbm\bigl(C(K;\K),C(K_{p\sim q};\K)\bigr)\leqslant2.
\]
\end{theorem}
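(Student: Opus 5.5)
We identify \(C(K_{p\sim q};\K)\) with the hyperplane \(H=\{f\in C(K;\K): f(p)=f(q)\}\), and aim to build an isomorphism \(T\colon C(K;\K)\to H\) with \(\|T\|\|T^{-1}\|\leqslant2\). This follows the scheme of Cohen's bound-two theorem: the hyperplane \(\ker(\delta_p-\delta_q)\) of \(C(K)\) is \(2\)-isomorphic to \(C(K)\) whenever the two points sit in a suitable configuration. The plan is to choose \(p\) and \(q\) so that this configuration is available in \(K=\Phi_\mu\). We then write down the representing-measure formula explicitly with real coefficients, so that it works verbatim for \(\K=\mathbb C\).

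\textbf{Choice of points.} Since \(K\) is zero-dimensional without isolated points, we fix a point \(p\) and a sequence of pairwise disjoint clopen sets \(U_n\). The idea is to take \(q\) to be a cluster point of a sequence of points \(q_n\in U_n\), with \(p\neq q\). We could also take \(p\) itself to be such a cluster point of a second disjoint sequence \(p_n\in V_n\), all the clopen sets being pairwise disjoint. In \(K\) a countable union of clopen sets has clopen closure (\(K\) is Stonean). Consequently \(W=\overline{\bigcup U_n}\) is clopen, and \(C(K)\) splits isometrically as \(C(W)\oplinf C(K\setminus W)\). This lets us localise the construction and build in the ``shift'' room Cohen's argument needs.

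\textbf{The operator.} In the classical case \(C(\omega+1)\cong c\) against \(c_0\), one uses \(Tf(n)=f(n+1)-f(\infty)\) or a similar shift. Here we would define \(T\colon C(K)\to H\) as follows. Use a homeomorphism-like shift \(\tau\) carrying the clopen pieces \(U_n\) onto \(U_{n+1}\); this is possible because all nonzero clopen subsets of the measure algebra of \([0,1]\) are homeomorphic, by Maharam homogeneity. Then put
\[
Tf=f\circ\tau\ \text{on } W\setminus U_1,\qquad Tf=f(p)\ind\ \text{(suitably)} ,
\]
arranged so that \(Tf(q)=Tf(p)\). More concretely, we aim for \(T^*\delta_x\) to be either a point mass or a measure of the form \(\delta_a+\delta_b-\delta_c\) or \(\tfrac12(\delta_a+\delta_b)\) with real coefficients. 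Then \(\|T\|\leqslant 2\) (or \(1\)) and \(\|T^{-1}\|\leqslant1\) (or \(2\)) can be read off pointwise.

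\textbf{Verification and the main obstacle.} Bijectivity is checked by producing the inverse explicitly via the inverse shift, exactly as in Cohen's proof. The norm bounds come from the representing measures. Complexification is immediate because the coefficients are real and the estimates are pointwise triangle inequalities. The main obstacle is continuity of \(Tf\) at the cluster points \(p,q\) in the non-metrisable space \(K\). Cluster points of sequences in \(K\) are not sequential limits, so the shift \(\tau\) must be compatible with the chosen ultrafilter-type limits. I would first try to choose \(q\) as a limit along a fixed free ultrafilter \(\mathcal U\) on \(\mathbb N\) of points \(q_n\), with \(p\) the \(\mathcal U\)-limit of \(\tau(q_n)=q_{n+1}\). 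This requires \(\mathcal U\) to be non-shift-invariant, so that \(p\neq q\). Continuity at \(p\) and \(q\) then reduces to the identity \(\lim_{\mathcal U}f(q_{n+1})=f(p)\), which holds by construction.
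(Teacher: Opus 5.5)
\textbf{There is a genuine gap: the proposal never constructs the operator, and the ingredients it lists cannot give the constant $2$.}

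Your structural preliminaries are fine. In the Stonean space $K$ the closure $W$ of $\bigcup U_n$ is clopen and is the Stone--\v{C}ech compactification of $\bigcup U_n$, so the shift $\tau$ extends continuously to $W$. Also $p\neq q$ is automatic, because the shift sends whichever of the even or odd indices lies in $\mathcal U$ onto the other.

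But the operator $T$ is the whole content of the statement, and you do not define it. The description ``$Tf=f\circ\tau$ on $W\setminus U_1$, $Tf=f(p)\ind$ (suitably)'' does not give a map $C(K;\K)\to H$. It does not say what $Tf$ is on $U_1$ or on $K\setminus W$, where the information discarded by the shift is stored, why $Tf(p)=Tf(q)$ holds, or why the range is all of $H$. So there is nothing for the `inverse shift' to invert.

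Your norm bookkeeping is also inconsistent. A representing measure $\delta_a+\delta_b-\delta_c$ with $a,b,c$ distinct has total variation $3$. Since $K$ is zero-dimensional, it forces $\|T\|\geqslant3$, not the bound $\|T\|\leqslant2$ you intend to read off. This ``shift and subtract the value at the special point'' pattern is the one that produces the constant $3$ for $c$ against $c_0$. To reach $2$ with the normalisation $\|T\|\leqslant1$, all representing measures must have norm at most $1$; in Cohen's construction they are point masses and averages $\frac12(\delta_a\pm\delta_b)$. They must be arranged weak$^*$-continuously at the glued point, and still recover every $f$ with loss at most $2$. You have not shown that your configuration $p=\tau(q)$ admits such an arrangement at all.

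The step you single out as the main obstacle is not one. Once $\tau$ is the continuous extension, continuity of $f\circ\tau$ is automatic, and $\lim_{\mathcal U}f(q_{n+1})=f(p)$ is just the definition of $p$. The real difficulty is absorbing the one-dimensional defect $\delta_p-\delta_q$ at cost only $2$.

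The paper does not build the operator by hand. It takes Cohen's explicit two-point bound-two isomorphism as given: norm $1$, inverse norm at most $2$, representing measures with real coefficients. It then verifies only that the component compacta in Cohen's construction may be taken to be non-empty clopen pieces of $K$. Each such piece is homeomorphic to $K$ by the measure-algebra isomorphism theorem, so one side of the construction is $K$ and the other is $K_{p\sim q}$.

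To close the gap you must do one of two things. Either make that reduction to Cohen's theorem, checking its hypotheses for clopen copies of $K$. Or write down $T$ and $T^{-1}$ explicitly and verify bijectivity onto $H$ together with $\|T\|\,\|T^{-1}\|\leqslant2$.
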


\begin{proof}[Reference and reduction]
This is the two-point case of Cohen's bound-two construction.  In Cohen's paper \cite[pp.~215--217]{Cohen1975} the isomorphism is given by its dual representation, with point masses and two-point measures as representing measures, and it has norm \(1\) with inverse norm at most \(2\) after rescaling.  The formula uses only real coefficients in the representing measures, and therefore gives a \(\K\)-linear operator on \(\K\)-valued continuous functions for both \(\K=\R\) and \(\K=\mathbb C\).  A topological formulation of the same bound-two phenomenon is given by Cohen--Chu \cite[Section~3, Theorem]{CohenChu}, where a finite bridge relation between compact Hausdorff spaces is shown to be equivalent to the existence of a bound-two isomorphism between the corresponding real \(C(K)\) spaces.

For the present application the component compacta in Cohen's construction are taken to be non-empty clopen pieces of the Stone space of the Lebesgue measure algebra.  This is legitimate because every non-empty clopen piece of \(K\) is again homeomorphic to \(K\): indeed, it is the Stone space associated with the restriction of a non-zero continuous finite measure, and the resulting character space is homeomorphic to \(\Phi_\mu\) by the measure-algebra isomorphism theorem recalled in \cite[Theorem~2.19 and Corollary~2.20]{DalesLauStrauss}.  With the two distinguished endpoints denoted by \(p\) and \(q\), the compactum on one side of Cohen's construction is homeomorphic to \(K\), and the one on the other side is homeomorphic to the quotient \(K_{p\sim q}\).
\end{proof}

Choose \(p,q\) as in Theorem~\ref{thm:cohen-bound-two}.  Since \(K\) is zero-dimensional, choose a clopen set \(U\) with \(p\in U\) and \(q\notin U\), put \(u=\ind_U-\ind_{K\setminus U}\), and define \(\varphi\in C(K;\K)^*\) by \(\varphi(f)=(f(p)-f(q))/2\).  Then \(\|\varphi\|=\|u\|=\varphi(u)=1\), and \(P_0f=\varphi(f)u\) is a norm-one projection from \(C(K;\K)\) onto \(\K u\).  Its kernel is \(H=\ker\varphi=\{f\in C(K;\K):f(p)=f(q)\}\), which is isometric to \(C(K_{p\sim q};\K)\).  Hence \(\dbm(H,C(K;\K))\leqslant2\).

Next use the standard countable decomposition \(X_\K\cong_1\ell_\infty(X_\K)\): partition \([0,1]\) into countably many measurable sets of positive measure, each carrying a measure algebra isomorphic to the Lebesgue measure algebra.  Under the identification \(X_\K\cong_1\ell_\infty(C(K;\K))\), define \(P((f_n))=(\varphi(f_n)u)_n\).  Then \(\|P\|=1\), \(P[X_\K]\) is isometric to \(Y_\K\), and \(E:=\ker P=\ell_\infty(H)\).  Applying a \(2\)-isomorphism \(H\to C(K;\K)\) coordinatewise gives
\[
        \dbm(E,X_\K)\leqslant2.
\]

It remains to recall the reverse complemented embedding.  Since \(L_1([0,1],\mu;\K)\) is separable, choose a countable norming family \((g_n)\) in its unit ball and set \(Jf=(\int_0^1 f(t)g_n(t)\,d\mu(t))_n\) for \(f\in X_\K\).  Then \(J\colon X_\K\to Y_\K\) is an isometry.  By Kelley's theorem \cite{Kelley}, \(X_\K\) is \(1\)-injective; consequently \(J[X_\K]\) is the range of a norm-one projection on \(Y_\K\).

\begin{theorem}\label{thm:upper}
For \(\K=\R\) or \(\K=\mathbb C\), one has \(\dbm(X_\K,Y_\K)\leqslant (2+\sqrt5)^2=9+4\sqrt5=17.944\ldots\).
\end{theorem}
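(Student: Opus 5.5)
The plan is to apply Theorem~\ref{thm:kernel-absorption-main} with \(X=X_\K\) and \(Y=Y_\K\), using the projection \(P\) and the reverse embedding \(J\) just constructed. All of its hypotheses have essentially been verified in the preceding paragraphs; the proof consists of assembling them and checking the few remaining ones.

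The first step is the forward projection. The operator \(P((f_n))=(\varphi(f_n)u)_n\), transported to \(X_\K\) via the isometry \(X_\K\cong_1\ell_\infty(C(K;\K))\), is a norm-one projection. Its range \(\{(\lambda_n u)_n\}\) is isometric to \(Y_\K\), because \(\|u\|=1\) gives \(\|(\lambda_n u)_n\|=\sup_n|\lambda_n|\). Its kernel is \(E=\ell_\infty(H)\). Applying a \(2\)-isomorphism \(H\to C(K;\K)\) from Theorem~\ref{thm:cohen-bound-two} coordinatewise gives an isomorphism \(E\to \ell_\infty(C(K;\K))\cong_1 X_\K\) with the same constant. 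Hence \(E\) is isomorphic to \(X_\K\) and \(\dbm(E,X_\K)\leqslant2\). One small point needs checking: the coordinatewise map has norm \(\sup_n\|A\|=\|A\|\) and inverse norm \(\|A^{-1}\|\), so the product stays below \(2+\varepsilon\) for every \(\varepsilon>0\). The infimum in the definition of \(\dbm\) therefore causes no harm.

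The second step is the reverse projection. Take \(R=J\circ Q\), where \(Q\) is the norm-one projection of \(Y_\K\) onto \(J[X_\K]\) given by Kelley's theorem. Its range is isometric to \(X_\K\) via \(J\).

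The third step is the isometry \(X_\K\cong X_\K\oplinf X_\K\). Split \([0,1]\) into two halves. Each half carries a measure algebra isomorphic to the Lebesgue one, and \(L_\infty\) of a disjoint union is the \(\ell_\infty\)-sum of the pieces. Alternatively, this follows from \(X_\K\cong_1\ell_\infty(X_\K)\) by splitting \(\mathbb N\) into two infinite sets.

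Finally, \(X_\K\neq0\). Theorem~\ref{thm:kernel-absorption-main} therefore yields \(\dbm(X_\K,Y_\K)\leqslant(\sigma+\sqrt{\rho\,\dbm(E,X_\K)+1})^2\), and with \(\rho,\sigma\leqslant2\) and \(\dbm(E,X_\K)\leqslant2\) this is at most \((2+\sqrt5)^2=9+4\sqrt5\).

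The main obstacle is not this assembly, which is bookkeeping. It is the input Theorem~\ref{thm:cohen-bound-two}, together with the identification of the kernel \(H\) with \(C(K_{p\sim q};\K)\). For that identification I would check that \(K_{p\sim q}\) is compact Hausdorff, which holds because we identify two points of a Hausdorff space. Continuous functions on \(K_{p\sim q}\) then correspond exactly to the functions \(f\in C(K;\K)\) with \(f(p)=f(q)\), with the same sup norm. Both facts are taken as established above.
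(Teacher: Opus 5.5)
Your proposal is correct and follows the paper's argument: you verify the hypotheses of Theorem~\ref{thm:kernel-absorption-main} for the coordinatewise projection \(P\), the Kelley projection onto \(J[X_\K]\), and the isometry \(X_\K\cong X_\K\oplinf X_\K\) (which the paper leaves implicit), and then substitute \(\rho,\sigma\leqslant2\) and \(\dbm(E,X_\K)\leqslant2\). The only slip is notational: \(R=J\circ Q\) makes sense only if \(Q\colon Y_\K\to X_\K\) is the norm-one extension of \(J^{-1}\colon J[X_\K]\to X_\K\) given by \(1\)-injectivity, in which case \(R^2=J(QJ)Q=R\); if \(Q\) is already the projection of \(Y_\K\) onto \(J[X_\K]\), simply take \(R=Q\).
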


\begin{proof}
Apply Theorem~\ref{thm:kernel-absorption-main} with \(X=X_\K\), \(Y=Y_\K\), \(\rho,\sigma\leqslant2\) and \(\dbm(E,X_\K)\leqslant2\).
\end{proof}

\section{Proof of Theorem A}

The assertion is Theorem~\ref{thm:upper}.  The only non-elementary input in the upper estimate is Theorem~\ref{thm:cohen-bound-two}, applied to the two-point quotient kernel \(H=\ker((\delta_p-\delta_q)/2)=\{f\in C(K;\K):f(p)=f(q)\}\).

\section{Optimality within finite scalar codings}\label{sec:cylindrical}

This section is not used in the proof of Theorem~\ref{thm:main-interval}.  It records the limitation of one natural class of refinements of the factorisation above.  The previous proof used the diagonal code \((Ae,u)\mapsto(\alpha Ae,\beta u)\) in \(X\oplinf X\).  One might try to improve the constant by coding the two \(X\)-variables into more \(\ell_\infty\)-coordinates.  The following precise statement says only that, within this finite scalar-coding scheme, the diagonal two-coordinate code is already optimal.

Let \(\rho,c>0\).  Let \(M=(m_{ji})\) be an \(N\times2\) matrix over \(\K\) of rank two and let \(L=(\ell_{ij})\) be a \(2\times N\) matrix over \(\K\) such that \(LM=I_2\).  We may regard \(M\) as sending two \(X\)-vectors \((q,u)\) to \(M(q,u)=(m_{j1}q+m_{j2}u)_{j=1}^N\in\ell_\infty^N(X)\), where \(q\) is the normalised absorbed-kernel variable and \(u\) is the range variable.  Put \(F(M)=\max_j(\rho|m_{j1}|+|m_{j2}|)\) and \(I(L)=c\sum_j|\ell_{1j}|+\sum_j|\ell_{2j}|\).

\begin{proposition}\label{prop:cylindrical}
For every such pair \((M,L)\), \(F(M)I(L)\geqslant\rho c+1\).  Equality is attained by the diagonal two-coordinate code \(M(q,u)=(q/\rho,u)\).
\end{proposition}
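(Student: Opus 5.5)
The plan is to extract the bound directly from the identity \(LM=I_2\), by reading each diagonal entry as a sum over the \(N\) coordinates and estimating it by \(F(M)\) times the corresponding \(\ell_1\)-mass of a row of \(L\).  Write \(F=F(M)\).  By definition, for every \(j\) we have \(\rho|m_{j1}|\leqslant F\) and \(|m_{j2}|\leqslant F\), since both are bounded by \(\rho|m_{j1}|+|m_{j2}|\).

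The diagonal entries of \(LM=I_2\) give
\[
1=\sum_j\ell_{1j}m_{j1}\leqslant\sum_j|\ell_{1j}||m_{j1}|
\]
and likewise \(1\leqslant\sum_j|\ell_{2j}||m_{j2}|\).  A naive estimate of each sum separately loses the coupling between the two columns, so I will weight and add them instead:
\[
\rho c+1=\rho c\sum_j\ell_{1j}m_{j1}+\sum_j\ell_{2j}m_{j2}\leqslant\sum_j\bigl(c|\ell_{1j}|\,\rho|m_{j1}|+|\ell_{2j}|\,|m_{j2}|\bigr).
\]
Bounding each \(\rho|m_{j1}|\) and each \(|m_{j2}|\) by \(F\) turns the right-hand side into
\[
F\sum_j\bigl(c|\ell_{1j}|+|\ell_{2j}|\bigr)=F\,I(L).
\]
This is the claimed inequality.

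For equality, take \(N=2\), \(M=\mathrm{diag}(1/\rho,1)\) and \(L=\mathrm{diag}(\rho,1)\).  Then \(LM=I_2\) and \(F(M)=\max\{1,1\}=1\), while \(I(L)=c\rho+1\), so the product equals \(\rho c+1\).  The off-diagonal entries of \(LM=I_2\) and the rank hypothesis play no role in the bound beyond guaranteeing that such \(L\) exists.

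The only point needing care is the choice of weights \(\rho c\) and \(1\) when combining the two diagonal identities.  These weights are forced by requiring that the termwise bound produces exactly \(F\cdot I(L)\).  With that choice the argument is a one-line Hölder-type estimate, so I expect no real obstacle.  The complex case is identical, since the argument uses only absolute values.
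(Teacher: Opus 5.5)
Your proof is correct and is essentially the paper's argument in coordinates: the paper applies subadditivity of the diamond functional $\rho|s|+|t|$ to $e_i=\sum_j\ell_{ij}r_j$, which is the same estimate as your triangle-inequality bound on the diagonal entries of $LM$. One small point: the weighting is not needed, since estimating the two diagonal sums separately and then adding $c$ times the first to the second gives exactly the same bound, so no coupling is lost by the naive route; your argument also shows that only the weaker row bounds $\rho|m_{j1}|\leqslant F(M)$ and $|m_{j2}|\leqslant F(M)$ are required.
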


\begin{proof}
By homogeneity it suffices to consider \(F(M)\leqslant1\).  If \(r_j=(m_{j1},m_{j2})\) is the \(j\)th row of \(M\), then every \(r_j\) belongs to the diamond \(D_\rho=\{(s,t)\in\K^2:\rho|s|+|t|\leqslant1\}\).  Its Minkowski functional is \(p_{D_\rho}(s,t)=\rho|s|+|t|\).  Since \(LM=I_2\), the coordinate vectors satisfy \(e_1=\sum_j\ell_{1j}r_j\) and \(e_2=\sum_j\ell_{2j}r_j\).  By subadditivity of \(p_{D_\rho}\), one has \(\sum_j|\ell_{1j}|\geqslant p_{D_\rho}(e_1)=\rho\) and \(\sum_j|\ell_{2j}|\geqslant p_{D_\rho}(e_2)=1\).  Hence \(I(L)\geqslant c\rho+1\) when \(F(M)\leqslant1\), and the general case follows by scaling \(M\).  Equality is obtained from \(M=\operatorname{diag}(1/\rho,1)\) and \(L=\operatorname{diag}(\rho,1)\).
\end{proof}

\begin{corollary}\label{cor:cylindrical}
After the unabsorbed \(F\)-kernel term is added with parameter \(\gamma>0\), every finite scalar code in the preceding model gives at least \(\bigl(\sigma+\sqrt{\rho c+1}\bigr)^2\), and the diagonal two-coordinate code attains equality.
\end{corollary}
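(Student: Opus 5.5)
The proof follows the pattern of the proof of Theorem~\ref{thm:kernel-absorption-main}, with the diagonal code replaced by a general finite code \((M,L)\).

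First write down the coded operator. For a rank-two \(N\times2\) matrix \(M\) and a left inverse \(L\), set
\[
T_Mx=\theta\Phi_N^{-1}\bigl(M(Ae,u)\bigr)+\gamma f,
\]
where \(e,u,f\) are the three pieces defined in the proof of Lemma~\ref{lem:T-bijection}, \(\Phi_N\colon X\to\ell_\infty^N(X)\) is a surjective isometry (obtained by iterating \(X\cong X\oplinf X\)), and \(A\) is normalised so that \(\|A\|=1\) and \(\|A^{-1}\|=c\). The inverse decodes \((Ae,u)=L\Phi_N\theta^{-1}RT_Mx\), exactly as in Lemma~\ref{lem:T-bijection}; bijectivity uses only \(LM=I_2\).

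Next bound the two norms. Since \(\|Ae\|\leqslant\rho\) and \(\|u\|\leqslant1\), and \(X\oplinf\cdots\) carries the max norm, we get \(\|T_M\|\leqslant F(M)+\sigma\gamma\). On the inverse side, the decoded vectors satisfy \(\|e\|\leqslant c\sum_j|\ell_{1j}|\) and \(\|u\|\leqslant\sum_j|\ell_{2j}|\). Hence \(\|T_M^{-1}\|\leqslant I(L)+\sigma/\gamma\). These are the natural quantities of the model, and the corollary is read as a statement about the product of these two bounds.

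Then optimise this product. Put \(F=F(M)\) and \(I=I(L)\). By Proposition~\ref{prop:cylindrical}, \(FI\geqslant\rho c+1\). Expanding,
\[
(F+\sigma\gamma)(I+\sigma/\gamma)=FI+\sigma^2+\sigma\bigl(\gamma I+F/\gamma\bigr).
\]
By AM--GM, \(\gamma I+F/\gamma\geqslant2\sqrt{FI}\), so the product is at least \(\bigl(\sigma+\sqrt{FI}\bigr)^2\geqslant\bigl(\sigma+\sqrt{\rho c+1}\bigr)^2\).

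For attainment, take the diagonal code \(M=\operatorname{diag}(1/\rho,1)\), for which \(FI=\rho c+1\), and choose \(\gamma=\sqrt{F/I}\). This reproduces Lemma~\ref{lem:optimisation}.

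The one delicate point is the meaning of ``gives at least''. The claim concerns the estimate produced by the scheme, namely the product of the norm bounds above, not \(\dbm\) itself. I would state this reading explicitly, so that the corollary does not appear to assert a lower bound for the actual operator norms.
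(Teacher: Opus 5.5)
Your core argument is the paper's proof. For fixed \(F=F(M)>0\) and \(I=I(L)>0\), AM--GM gives \((F+\sigma\gamma)(I+\sigma/\gamma)\geqslant(\sigma+\sqrt{FI})^2\), with equality at \(\gamma=\sqrt{F/I}\). Proposition~\ref{prop:cylindrical} then supplies \(FI\geqslant\rho c+1\), with equality for the diagonal code \(M=\operatorname{diag}(1/\rho,1)\), \(L=\operatorname{diag}(\rho,1)\). Your reading of ``gives at least'' as a statement about the product of the two norm bounds, not about \(\dbm\), is also how the paper uses it.

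One claim in your operator realisation is false and should be dropped or corrected: ``bijectivity uses only \(LM=I_2\)''. The identity \(LM=I_2\) makes your decoding map \(S\) a left inverse of \(T_M\), but not a right inverse. Computing directly, \(T_MSz=\theta\Phi_N^{-1}\bigl(ML\,\Phi_N\theta^{-1}Rz\bigr)+(\Id_Y-R)z\). So \(T_MS=\Id_Y\) would require \(ML=I_N\), which is impossible for \(N\geqslant3\) because \(ML\) has rank two. For \(N\geqslant3\) the range of \(T_M\) is \(\theta\Phi_N^{-1}\bigl(M[X\oplinf X]\bigr)\oplus\ker R\), a proper subspace of \(Y\). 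Thus \(T_M\) is only an isomorphic embedding, and your bound for \(S\) does not bound the inverse of an isomorphism onto \(Y\). Only for \(N=2\), where \(L=M^{-1}\), is \(T_M\) a bijection.

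This does not damage the corollary. It is purely the numerical inequality above, and a lower bound proved over a larger class of codes is only stronger. But you should not present the general code as producing Banach--Mazur estimates.
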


\begin{proof}
For fixed positive \(F\) and \(I\), \(\inf_{\gamma>0}(F+\sigma\gamma)(I+\sigma/\gamma)=(\sigma+\sqrt{FI})^2\).  Proposition~\ref{prop:cylindrical} gives \(FI\geqslant\rho c+1\).
\end{proof}

\begin{remark}
The r\^ole of the reverse kernel \(F=\ker R\) is thus only to contribute the two terms \(\sigma\gamma\) and \(\sigma/\gamma\) in the forward and inverse estimates.  The absorbed kernel \(E=\ker P\) contributes through \(\rho c\), while the range coordinate contributes the additional \(1\).
\end{remark}


\begin{thebibliography}{99}

\bibitem{Cohen1975}
H.~B. Cohen,
\emph{A bound-two isomorphism between $C(X)$ Banach spaces},
Proc. Amer. Math. Soc. \textbf{50} (1975), 215--217.
DOI: \url{https://doi.org/10.1090/S0002-9939-1975-0380379-5}.

\bibitem{CohenChu}
H.~B. Cohen and C.-H. Chu,
\emph{Topological conditions for bound-2 isomorphisms of $C(X)$},
Studia Math. \textbf{113} (1995), no.~1, 1--24.
DOI: \url{https://doi.org/10.4064/sm-113-1-1-24}.
Also available at \url{https://eudml.org/doc/216156}.


\bibitem{DalesLauStrauss}
H.~G. Dales, A.~T.-M. Lau and D.~Strauss,
\emph{Second duals of measure algebras},
Dissertationes Mathematicae (Rozprawy Matematyczne) \textbf{481} (2012), 1--121.
DOI: \url{https://doi.org/10.4064/dm481-0-1}.

\bibitem{KaniaLewicki}
T.~Kania and G.~Lewicki,
\emph{$(\lambda^+)$-injective Banach spaces},
arXiv:2603.09710, 2026.
DOI: \url{https://doi.org/10.48550/arXiv.2603.09710}.
\url{https://arxiv.org/abs/2603.09710}.

\bibitem{Kelley}
J.~L. Kelley,
\emph{Banach spaces with the extension property},
Trans. Amer. Math. Soc. \textbf{72} (1952), 323--326.
DOI: \url{https://doi.org/10.1090/S0002-9947-1952-0045940-5}.

\bibitem{Pelczynski1960}
A.~Pe\l czy\'nski,
\emph{Projections in certain Banach spaces},
Studia Math. \textbf{19} (1960), no.~2, 209--228.
DOI: \url{https://doi.org/10.4064/SM-19-2-209-228}.
Also available at \url{https://eudml.org/doc/216957}.

\bibitem{ThielMO}
H.~Thiel,
\emph{What is the Banach--Mazur distance between $\ell_\infty$ and $L_\infty$?},
MathOverflow question 241743, asked 8 June 2016.
\url{https://mathoverflow.net/questions/241743/what-is-the-banach-mazur-distance-between-ell-infty-and-l-infty}.

\bibitem{ComplementConstantMO}
\emph{Answer to ``Subspaces of $L_\infty$ isometric to $\ell_\infty$''},
MathOverflow answer 506918, answered 11 January 2026.
\url{https://mathoverflow.net/a/506918/15129}.

\bibitem{KorpalskiPlebanek}
M.~Korpalski and G.~Plebanek,
\emph{How many miles from $L_\infty$ to $\ell_\infty$?},
arXiv:2511.12672v3, revised 15 April 2026.
DOI: \url{https://doi.org/10.48550/arXiv.2511.12672}.
\url{https://arxiv.org/abs/2511.12672}.

\end{thebibliography}
\end{document}